\newtheorem{theorem}{Theorem}
\newtheorem{definition}[theorem]{Definition}
\newtheorem{lemma}[theorem]{Lemma}
\newtheorem{question}[theorem]{Question}
\begin{document}

\title[On extensions of the standard representation of $B_n$ to $SB_n$]{On extensions of the standard representation of the braid group to the Singular Braid group}

\author{Mohamad N. Nasser}

\address{Mohamad N. Nasser\\
         Department of Mathematics and Computer Science\\
         Beirut Arab University\\
         P.O. Box 11-5020, Beirut, Lebanon}
         
\email{m.nasser@bau.edu.lb}

\begin{abstract}
For an integer $n \geq 2$, set $B_n$ to be the braid group on $n$ strands and $SB_n$ to be the singular braid group on $n$ strands. $SB_n$ is one of the important group extensions of $B_n$ that appeared in 1998. Our aim in this paper is to extend the well-known standard representation of $B_n$, namely $\rho_S:B_n \to GL_n(\mathbb{Z}[t^{\pm 1}])$, to $SB_n$, for all $n \geq 2$, and to investigate the characteristics of these extended representations as well. The first major finding in our paper is that we determine the form of all representations of $SB_n$, namely $\rho'_S: SB_n \to GL_n(\mathbb{Z}[t^{\pm 1}])$, that extend $\rho_S$, for all $n\geq 2$. The second major finding is that we find necessary and sufficient conditions for the irreduciblity of the representations of the form $\rho'_S$ of $SB_n$, for all $n\geq 2$. We prove that, for $t\neq 1$, the representations of the form $\rho'_S$ are irreducible and, for $t=1$, the representations of the form $\rho'_S$ are irreducible if and only if $a+c\neq 1.$ The third major result is that we consider the virtual singular braid group on $n$ strands, $VSB_n$, which is a group extension of both $B_n$ and $SB_n$, and we determine the form of all representations $\rho''_S: VSB_2 \to GL_2(\mathbb{Z}[t^{\pm 1}])$, that extend $\rho_S$ and $\rho'_S$; making a path toward finding the form of all representations $\rho''_S: VSB_n \to GL_n(\mathbb{Z}[t^{\pm 1}])$, that extend $\rho_S$ and $\rho'_S$, for all $n\geq 3$.
\end{abstract}

\maketitle
\renewcommand{\thefootnote}{}
\footnote{\textit{Key words and phrases.} Braid Group, Singular Braid Group, Standard Representation, Irreducibility, Faithfulness.}
\footnote{\textit{Mathematics Subject Classification.} Primary: 20F36.}

\vspace*{-1cm}

\section{Introduction} %%%%%%%%%% Section 1 %%%%%%%%%%%%%

The braid group on $n$ strands, $B_n$, is a fundamental algebraic structure that has connections to many areas of Mathematics such as Representation Theory, Knot Theory and Quantum Computing \cite{K.Mu, C.DE}. It was introduced by E. Artin in 1925 \cite{E.A}, where it was defined by its generators $\sigma_1,\sigma_2, \ldots,\sigma_{n-1}$ with the following defining relations.
\begin{align*}
&\sigma_i\sigma_{i+1}\sigma_i = \sigma_{i+1}\sigma_i\sigma_{i+1} ,\hspace{0.65cm} i=1,2,\ldots,n-2,\\
&\ \ \hspace*{0.05cm} \ \ \ \sigma_i\sigma_j = \sigma_j\sigma_i , \hspace{1.65cm} |i-j|\geq 2.
\end{align*}

The pure braid group, $P_n$, is a normal subgroup of $B_n$, which is defined as the kernel of the homomorphism $B_n \rightarrow S_n$ defined by $\sigma_i \rightarrow (i \hspace{0.2cm} i+1)$, $1\leq i \leq n-1$, where $S_n$ is the symmetric group of $n$ elements (see \cite{E.A.2}). It admits a presentation with the following generators.
$$A_{ij}=\sigma_{j-1}\sigma_{j-2}\ldots \sigma_{i+1}\sigma^2_{i}\sigma^{-1}_{i+1}\ldots \sigma^{-1}_{j-2}\sigma^{-1}_{j-1}, \hspace{0.5cm} 1\leq i<j\leq n.$$

There are a lot of generalizations of $B_n$, both from algebraic and geometric point of view. One of these generalizations is the singular braid monoid, $SM_n$, which was introduced by J. Baez in 1992 \cite{J.Ba} and J. Birman in 1993 \cite{J.Bi} independently. It is a monoid generated by the generators $\sigma_1^{\pm 1},\sigma_2^{\pm 1}, \ldots,\sigma_{n-1}^{\pm 1}$ of $B_n$ and a family of singular generators $\tau_1,\tau_2, \ldots, \tau_{n-1}$. In 1998, R. Fenn, E. Keyman and C. Rourke proved that $SM_n$ embeds into a group, denoted by $SB_n$, and called the singular braid group \cite{R.F}, which has the same generators and defining relations as $SM_n$.

\vspace*{0.1cm}

On the other hand, one of the monoid extensions of both $B_n$ and $SM_n$ is the virtual singular braid monoid, $VSM_n$, which was introduced by C. Caprau, A. Pena and S. McGahan in 2016 \cite{C.Ca}. $VSM_n$ is a monoid generated by the generators $\sigma_1^{\pm 1},\sigma_2^{\pm 1}, \ldots,\sigma_{n-1}^{\pm 1}$, $\tau_1,\tau_2, \ldots, \tau_{n-1}$ of $SM_n$ in addition to a family of non-singular generators $\nu_1^{\pm 1},\nu_2^{\pm 1}, \ldots, \nu_{n-1}^{\pm 1}$. It was shown in 2022 by C. Caprau and A. Yeung that $VSM_n$ embeds into a group, denoted by $VSB_n$, and called the virtual singular braid group \cite{C.Ca2022}, which has the same generators and defining relations as $VSM_n$.

\vspace*{0.1cm}

One of the most important questions to be examined in this field is to study the irreducibility of representations of many algebraic structures. A representation $\rho$ of a group $G$ on a vector space $V$ is said to be irreducible if there is no non-trivial subspace of $V$ which is invariant under $\rho$. Another important question to be examined for representations is to study their faithfulness. The importance of finding a faithful representation of a group $G$ is to prove that $G$ is linear.

\vspace*{0.1cm}

There are many famous representations of the braid group $B_n$ where the question of irreducibility and faithfulness is answered. One of these representations is Burau representation \cite{W.B} and the other is Lawrence-Krammer-Bigelow representation \cite{R.Law90}. Regarding the irreducibility of these two representations, it has been proven by E. Formanek in \cite{E.F} that Burau representation is reducible, and by C. Levaillant in \cite{C.Le} that Lawrence-Krammer-Bigelow representation is irreducible. On the other hand, regarding the faithfulness, Burau representation was proved to be faithful for $n\leq 3$ in \cite{J.Bi1974} and unfaithful for $n\geq 5$ in \cite{J.Moo, D.long, S.Big}; whereas the case $n=4$ remains open. In contrast, it was shown in \cite{S.Big2001} and \cite{D.Kram2002} that Lawrence-Krammer-Bigelow representation is faithful for all $n\geq 2$; thus, $B_n$ is shown to be linear.

\vspace*{0.1cm}

Lately, it was interested in extending the known representations of $B_n$ to its group and monoid extensions, and to examine the irreducibility and the faithfulness of these extended representations. An important task to do here is to study the ways of extending representations of $B_n$ to its extensions. One of these ways, for example, is the $k$-local extensions way, which is defined by M. Nasser in \cite{M.N20241}. A lot of work has been done in classifying this type of extensions to many group and monoid extensions of $B_n$ (see \cite{T.M, M.M.M}). More specifically, another way for extending representations of $B_n$ to $SM_n$ and $SB_n$ was found by V. Bardakov, N. Chbili, and T. Kozlovskaya in \cite{BCK}; we call it the $\Phi$-type extensions way. One of the extensions in the $\Phi$-type way is the Birman representation of $SM_n$ defined in \cite{J.Bi}, that was shown to be faithful by L. Paris in \cite{L.Pa}, which implies that $SM_n$ is linear. M. Nasser studied the faithfulness of this type of extensions of representations of $B_n$ to $SM_n$ in some cases \cite{M.N20240}. Also, M. Prabhakar and N. Komal extended the concept of $\Phi$-type extensions to one of the monoid extensions of both $B_n$ and $SM_n$, namely the singular twisted virtual braid monoid \cite{Pra2025}.

\vspace*{0.1cm}

In this paper, we are interested in another important representation of $B_n$, which is called the standard representation (or Tong-Yang-Ma representation), that introduced first by D. Tong, S. Yang and Z. Ma in 1996 \cite{D.T}. Necessary and sufficient conditions for the irreducibility of the complex specialization of this representation has been done by I. Sysoeva in \cite{I.S}. She proved that the standard representation $\rho_S: B_n\to GL_n(\mathbb{C})$ is irreducible if and only if $t\neq 1$. Regarding the faithfulness, the standard representation $\rho_S: B_n\to GL_n(\mathbb{Z}[t^{\pm 1}])$, where $t$ is indeterminate, was proved to be unfaithful for all $n\geq 3$ (see \cite{G.M}, \cite{C.B}).

\vspace*{0.1cm}

Recently, in \cite{A.So}, A. Soulie and A. Takano used the method of D. Silver and S. Williams introduced in \cite{D.Si} to extend the standard representation $\rho_S$ to the string links and welded string links. We aim, in our article, to extend the standard representation $\rho_S$ to two group extensions of the braid group $B_n$: the singular braid group $SB_n$ and the virtual singular braid group $VSB_n$. Moreover, we examine the irreducibility and the faithfulness of these extensions. The paper is organized as follows. In Section 2, we give the main definitions and relations for the targeted groups and monoids in this paper: $B_n$, $SM_n$, $SB_n$, $VSM_n$ and $VSB_n$, and we give the definition and the main characteristics of the standard representation $\rho_S$ of $B_n$. In section 3, we find the forms of all extensions of the standard representation $\rho_S$ of $B_n$ to $SB_n$, for all $n\geq 2$ (Theorem \ref{ThSB2} and Theorem \ref{ThSB3}). In section 4, we consider all extensions $\rho'_S$ of the standard representation of $B_n$ to $SB_n$. In the first subsection, we find necessary and sufficient conditions for the irreducibility of the representations $\rho'_S$ of $SB_n$, for all $n\geq 2$ (Theorem \ref{irredd}), while in the second subsection, we discuss the faithfulness of these representations, for all $n\geq 2$. At the last, making a path toward a future work, we find the forms of all extensions of the standard representation $\rho_S$ of $B_2$ to $VSB_2$ (Theorem \ref{Theexttt}). In addition, we offered a few significant open questions for further research.
  
\vspace*{0.15cm}
  
\section{Generalities and Previous Results} %%%%%%%%%% Section 2 %%%%%%%%%%%%%

In the beginning of this section, we give the presentation of the braid group $B_n$ introduced by E. Artin in 1925.

\begin{definition} \cite{E.A}
The braid group, $B_n$, is the group defined by the Artin generators $\sigma_1,\sigma_2,\ldots,\sigma_{n-1}$ with the defining relations
\begin{equation} \label{eqs1}
\ \ \ \ \sigma_i\sigma_{i+1}\sigma_i = \sigma_{i+1}\sigma_i\sigma_{i+1} ,\hspace{0.45cm} i=1,2,\ldots,n-2,
\end{equation}
\begin{equation} \label{eqs2}
\sigma_i\sigma_j = \sigma_j\sigma_i , \hspace{1.48cm} |i-j|\geq 2.
\end{equation}
\end{definition}

Now, we give the presentation of the singular braid monoid $SM_n$ introduced by J. Baez in 1992 and J. Birman in 1993 independently.

\begin{definition} \cite{J.Ba,J.Bi}
The singular braid monoid, $SM_n$, is the monoid defined by the generators $\sigma_1^{\pm 1},\sigma_2^{\pm 1}, \ldots,\sigma_{n-1}^{\pm 1}$ of $B_n$ in addition to the singular generators $\tau_1,\tau_2, \ldots, \tau_{n-1}$. In addition to the relations (\ref{eqs1}) and (\ref{eqs2}) above, the generators $\sigma_i^{\pm 1}$ and $\tau_i$ of $SM_n$ satisfy the following relations.
\begin{align}
\label{eqs3}
& \hspace*{-0.55cm} \tau_i \tau_j = \tau_j \tau_i,\qquad \ \ \ \ \ \ \ \ |i - j| \geq 2,\\
\label{eqs4}
\tau_i \sigma_j &= \sigma_j \tau_i,\qquad \ \ \ \ \ \ \ \ |i - j| \geq 2,\\
\label{eqs5}
\tau_i \sigma_i &= \sigma_i \tau_i,\qquad \ \ \ \ \ \ \ \ i = 1, 2, \ldots, n-1,\\
\label{eqs6}
\sigma_i \sigma_{i+1} \tau_i &= \tau_{i+1} \sigma_i \sigma_{i+1}, \qquad i = 1, 2, \ldots, n-2,\\
\label{eqs7}
\sigma_{i+1} \sigma_{i} \tau_{i+1} &= \tau_{i} \sigma_{i+1} \sigma_{i},\qquad \ \ \hspace*{0.1cm}  i = 1, 2, \ldots, n-2.
\end{align}
\end{definition}

In \cite{R.F}, R. Fenn, E. Keyman and C. Rourke proved that $SM_n$ embeds into the group $SB_n$ which is called the singular braid group and has the same generators and defining relations as $SM_n$.
\vspace*{0.05cm}

Next, we give the presentation of the virtual singular braid monoid $VSM_n$ introduced by C. Caprau, A. Pena and S. McGahan in 2016.

\begin{definition} \cite{C.Ca}
The virtual singular braid monoid, $VSM_n$, is the monoid defined by the generators $\sigma_1^{\pm 1},\sigma_2^{\pm 1}, \ldots,\sigma_{n-1}^{\pm 1}$ and $\tau_1,\tau_2, \ldots, \tau_{n-1}$ of $SM_n$ in addition to a new family of non-singular generators, namely $\nu_1^{\pm 1},\nu_2^{\pm 1},\ldots, \nu_{n-1}^{\pm 1}$. In addition to the relations (\ref{eqs1}),\ldots,(\ref{eqs7}) above, the generators $\sigma_i^{\pm 1},$ $\tau_i$, and $\nu_i^{\pm 1}$ of $VSM_n$ satisfy the following relations.
\begin{equation} \label{eqs81}
\ \ \ \  \ \ \ \ \ \ \ \nu_i^2=1 ,\hspace{1.85cm} i=1,2,\ldots, n-1,
\end{equation}
\begin{equation} \label{eqs91}
\ \ \ \ \nu_i\nu_{i+1}\nu_i = \nu_{i+1}\nu_i\nu_{i+1} ,\hspace{0.45cm} i=1,2,\ldots,n-2,
\end{equation}
\begin{equation} \label{eqs101}
\ \ \ \ \nu_i\sigma_{i+1}\nu_i = \nu_{i+1}\sigma_i\nu_{i+1} ,\hspace{0.45cm} i=1,2,\ldots,n-2,
\end{equation}
\begin{equation} \label{eqs111}
\ \ \ \ \nu_i\tau_{i+1}\nu_i = \nu_{i+1}\tau_i\nu_{i+1} ,\hspace{0.5cm} i=1,2,\ldots,n-2,
\end{equation}
\begin{equation} \label{eqs121}
\nu_i\sigma_j=\sigma_j\nu_i ,\hspace{1.4cm} |i-j|\geq 2,
\end{equation}
\begin{equation} \label{eqs131}
\nu_i\tau_j=\tau_j\nu_i ,\hspace{1.5cm} |i-j|\geq 2.
\end{equation}
\end{definition}

In \cite{C.Ca2022}, C. Caprau and A. Yeung showed that $VSM_n$ embeds into the group $VSB_n$ which is called the virtual singular braid group and has the same generators and defining relations as $VSM_n$.
\vspace*{0.1cm}

Now, we give the concept of an important type of representations of $B_n$, namely local representations.

\begin{definition}
A representation $\rho: B_n \rightarrow GL_{m}(\mathbb{Z}[t^{\pm 1}])$ is said to be local if it has the form
$$\rho(\sigma_i) =\left( \begin{array}{c|@{}c|c@{}}
   \begin{matrix}
     I_{i-1} 
   \end{matrix} 
      & 0 & 0 \\
      \hline
    0 &\hspace{0.2cm} \begin{matrix}
   		M_i
   		\end{matrix}  & 0  \\
\hline
0 & 0 & I_{n-i-1}
\end{array} \right) \hspace*{0.2cm} \text{for} \hspace*{0.2cm} 1\leq i\leq n-1,$$ 
where $M_i \in GL_k(\mathbb{Z}[t^{\pm 1}])$ with $k=m-n+2$ and $I_r$ is the $r\times r$ identity matrix. The representation $\rho$ is said to be homogeneous if all the matrices $M_i$ are equal.
\end{definition}

In the next definition, we give the concept of local representations of $SM_n$ and $SB_n$. In a typical way, the concept of local representations of $VSM_n$ and $VSB_n$ could be defined also.

\begin{definition} \label{LocalSMn}
A representation $\rho: SM_n \rightarrow GL_{m}(\mathbb{Z}[t^{\pm 1}])$ is said to be local if it has the form
$$\rho(\sigma_i)=\left( \begin{array}{c|@{}c|c@{}}
   \begin{matrix}
     I_{i-1} 
   \end{matrix} 
      & 0 & 0 \\
      \hline
    0 &\hspace{0.2cm} \begin{matrix}
   		M_i
   		\end{matrix}  & 0  \\
\hline
0 & 0 & I_{n-i-1}
\end{array} \right) \hspace*{0.2cm} \text{for} \hspace*{0.2cm} 1\leq i\leq n-1,$$
and
$$\rho(\tau_i) =\left( \begin{array}{c|@{}c|c@{}}
   \begin{matrix}
     I_{i-1} 
   \end{matrix} 
      & 0 & 0 \\
      \hline
    0 &\hspace{0.2cm} \begin{matrix}
   		N_i
   		\end{matrix}  & 0  \\
\hline
0 & 0 & I_{n-i-1}
\end{array} \right) \hspace*{0.2cm} \text{for} \hspace*{0.2cm} 1\leq i\leq n-1,$$ 
where $M_i \in GL_k(\mathbb{Z}[t^{\pm 1}])$ and $N_i \in M_k(\mathbb{Z}[t^{\pm 1}])$ with $k=m-n+2$ and $I_r$ is the $r\times r$ identity matrix. The representation $\rho$ is said to be homogeneous if all the matrices $M_i$ are equal and all the matrices $N_i$ are equal. If in addition $\rho(\tau_i)$ is invertible in $GL_{m}(\mathbb{Z}[t^{\pm 1}])$ for all $1\leq i \leq n-1$, then $\rho$ becomes a representation of $SB_n$.
\end{definition}

The representations given in the following definitions are examples of the homogeneous local representations of the braid group $B_n$ to $GL_m(\mathbb{Z}[t^{\pm 1}])$ in two cases: $m=n$ and $m=n+1$.
 
\begin{definition} \cite{W.B} \label{defBurau}
The Burau representation $\rho_B: B_n\rightarrow GL_n(\mathbb{Z}[t^{\pm 1}])$ is the representation defined by
$$\sigma_i\rightarrow \left( \begin{array}{c|@{}c|c@{}}
   \begin{matrix}
     I_{i-1} 
   \end{matrix} 
      & 0 & 0 \\
      \hline
    0 &\hspace{0.2cm} \begin{matrix}
   	1-t & t\\
   	1 & 0\\
\end{matrix}  & 0  \\
\hline
0 & 0 & I_{n-i-1}
\end{array} \right) \hspace*{0.2cm} \text{for} \hspace*{0.2cm} 1\leq i\leq n-1.$$ 
\end{definition}

\begin{definition} \cite{V.B2016} \label{defF}
The $F$-representation $\rho_F: B_n \rightarrow GL_{n+1}(\mathbb{Z}[t^{\pm 1}])$ is the representation defined by
$$\sigma_i\rightarrow \left( \begin{array}{c|@{}c|c@{}}
   \begin{matrix}
     I_{i-1} 
   \end{matrix} 
      & 0 & 0 \\
      \hline
    0 &\hspace{0.2cm} \begin{matrix}
   		1 & 1 & 0 \\
   		0 &  -t & 0 \\   		
   		0 &  t & 1 \\
   		\end{matrix}  & 0  \\
\hline
0 & 0 & I_{n-i-1}
\end{array} \right) \hspace*{0.2cm} \text{for} \hspace*{0.2cm} 1\leq i\leq n-1.$$ 
\end{definition}

For more information on the irreducibility of the Burau representation and the $F$-representation, see \cite{E.F} and \cite{M.N2024} respectively.
\vspace*{0.1cm}

Another famous local representation of $B_n$ that is given in the following definition is called the standard representation. The standard representation was introduced first by D. Tong, S. Yang and Z. Ma in 1996.  

\begin{definition}\cite{D.T}
Let $t$ be indeterminate. The standard representation $\rho_S: B_n\to GL_n(\mathbb{Z}[t^{\pm 1}])$ is the representation given by
$$\sigma_i \rightarrow \left( \begin{array}{c|@{}c|c@{}}
   \begin{matrix}
     I_{i-1} 
   \end{matrix} 
      & 0 & 0 \\
      \hline
    0 &\hspace{0.2cm} \begin{matrix}
   	0 & t\\
   	1 & 0\\
\end{matrix}  & 0  \\
\hline
0 & 0 & I_{n-i-1}
\end{array} \right) \hspace*{0.2cm} \text{for} \hspace*{0.2cm} 1\leq i \leq n-1.$$ 
\end{definition}

In the following theorem, we specialize $t$ to be a non-zero complex number and we set a previous result done by I. Sysoeva regarding irreducibility of the standard representation $\rho_s$.

\begin{theorem} \cite{I.S} \label{irred}
The standard representation $\rho_S: B_n\to GL_n(\mathbb{C})$ is irreducible if and only if $t\neq 1$
\end{theorem}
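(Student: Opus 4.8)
The plan is to prove the two implications separately, with essentially all the work in the direction \emph{$t\neq 1 \Rightarrow \rho_S$ irreducible}; I will carry this out for $n\geq 3$, since for $n=2$ the group $B_2$ is infinite cyclic and the single matrix $\rho_S(\sigma_1)$ already has an eigenvector, so $\rho_S$ is reducible for every $t$ and the statement is really intended for $n\geq 3$.

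For the direction $t=1\Rightarrow\rho_S$ reducible: when $t=1$ each $\rho_S(\sigma_i)$ is the permutation matrix of the transposition $(i\ \ i+1)$, so $\rho_S$ factors through the canonical surjection $B_n\twoheadrightarrow S_n$ and is the permutation representation on $\mathbb{C}^n$; the nonzero proper subspace $\mathbb{C}(e_1+\cdots+e_n)$ is then $\rho_S(B_n)$-invariant. (Concretely, $\rho_S(\sigma_i)(e_1+\cdots+e_n)=(e_1+\cdots+e_n)+(t-1)e_i$, which stays on this line exactly when $t=1$; this also foreshadows the dichotomy.)

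For the direction $t\neq 1\Rightarrow\rho_S$ irreducible, let $W\subseteq\mathbb{C}^n$ be a nonzero $\rho_S(B_n)$-invariant subspace. The key computation is that $\left(\begin{smallmatrix}0 & t\\ 1 & 0\end{smallmatrix}\right)^2=tI_2$, so that $\rho_S(\sigma_i)^2=I_n+(t-1)P_i$, where $P_i$ denotes the diagonal idempotent projecting onto $\operatorname{span}(e_i,e_{i+1})$. Since $t\neq 1$, we may solve $P_i=(t-1)^{-1}\bigl(\rho_S(\sigma_i^2)-I_n\bigr)$, so each $P_i$ lies in the subalgebra of $M_n(\mathbb{C})$ generated by $\rho_S(B_n)$; hence $W$ is invariant under every $P_i$. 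A short computation now recovers all the diagonal rank-one idempotents: $P_iP_{i+1}=E_{i+1,i+1}$ for $1\leq i\leq n-2$, and then $E_{11}=P_1-E_{22}$ and $E_{nn}=P_{n-1}-E_{n-1,n-1}$. As the $E_{jj}$ are orthogonal idempotents with $\sum_j E_{jj}=I_n$, we get $W=\bigoplus_j E_{jj}W$ with each summand equal to $0$ or $\mathbb{C}e_j$; thus $W=\operatorname{span}(e_j:j\in S)$ is a coordinate subspace, where $S=\{\, j : e_j\in W \,\}$.

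It remains to combine this with $\rho_S(\sigma_i)e_i=e_{i+1}$ and $\rho_S(\sigma_i)e_{i+1}=te_i$ (recall $t\neq 0$): invariance of $W$ forces $i\in S\iff i+1\in S$ for all $i$, hence $S=\varnothing$ or $S=\{1,\dots,n\}$, and since $W\neq 0$ we conclude $W=\mathbb{C}^n$. The one place needing care is this middle step — noticing that $\rho_S(\sigma_i)^2$ is a rescaled projection, and then squeezing out enough idempotents from the $P_i$ to force $W$ to be a coordinate subspace — together with the bookkeeping fact that this argument genuinely breaks at $n=2$, where $P_1=I_2$ and, correctly, yields no information.
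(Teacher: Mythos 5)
Your proof is correct, and there is nothing in the paper to compare it against: the statement is imported from Sysoeva \cite{I.S} as a black box, with no proof given. Your argument is a sound, self-contained substitute. The $t=1$ direction via the invariant line $\mathbb{C}(e_1+\cdots+e_n)$ is right, and the converse is the standard idempotent-extraction argument done cleanly: from $\rho_S(\sigma_i)^2=I_n+(t-1)P_i$ with $P_i=E_{ii}+E_{i+1,i+1}$, the hypothesis $t\neq 1$ places every $P_i$ in the algebra generated by $\rho_S(B_n)$, the products $P_iP_{i+1}=E_{i+1,i+1}$ together with $E_{11}=P_1-E_{22}$ and $E_{nn}=P_{n-1}-E_{n-1,n-1}$ recover all the $E_{jj}$ (this is exactly where $n\geq 3$ is used), so any invariant subspace is a coordinate subspace, and the relations $\rho_S(\sigma_i)e_i=e_{i+1}$, $\rho_S(\sigma_i)e_{i+1}=te_i$ with $t\neq 0$ force it to be $0$ or $\mathbb{C}^n$. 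Incidentally, the paper's own proof of Theorem \ref{irredd}(2) runs a cousin of your final step (producing vectors $e_j-e_{j+1}$ and propagating basis vectors along the $\sigma_i$), so your method is stylistically consistent with what the author does elsewhere.

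One point you raise deserves emphasis rather than being tucked into an aside: as transcribed, the statement omits the hypothesis $n\geq 3$ and is false for $n=2$, since $B_2$ is infinite cyclic and $\rho_S(\sigma_1)$ has distinct eigenvalues $\pm\sqrt{t}$, hence two invariant lines for every $t$. This is not cosmetic for the surrounding paper: Theorem \ref{irredd}(1) combines the present statement with Lemma \ref{lemma0} to claim irreducibility of $\rho'_S$ for all $n\geq 2$ when $t\neq 1$, and that deduction fails at $n=2$ (indeed $(\sqrt{t},1)^{T}$ is a common eigenvector of $\rho'_S(\sigma_1)$ and $\rho'_S(\tau_1)$, so the extended representation of $SB_2$ is likewise reducible for every $t$). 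Your proof is correct for $n\geq 3$, which is the honest range of validity of the cited theorem.
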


On the other hand, the following theorem is a previous result done by C. Blanchet and I. Marin regarding the faithfulness of the standard representation $\rho_S$.

\begin{theorem} \cite{C.B} \label{faith}
The standard representation $\rho_S: B_n\to GL_n(\mathbb{Z}[t^{\pm 1}])$ is unfaithful for all $n\geq 3$.
\end{theorem}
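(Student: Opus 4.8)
The plan is to show $\ker\rho_S\neq\{1\}$ by producing a single explicit braid word that dies under $\rho_S$ for every $n\ge 3$, preceded by a structural observation explaining why such a word must exist. The structural point is that every generator matrix $\rho_S(\sigma_i)$ is \emph{monomial} — exactly one nonzero entry in each row and each column — and the set of invertible monomial matrices is a group; hence $\rho_S(B_n)$ sits inside the group $\mathrm{Mon}_n$ of invertible monomial $n\times n$ matrices over $\mathbb{Z}[t^{\pm1}]$. Factoring a monomial matrix as a permutation matrix times a diagonal matrix identifies $\mathrm{Mon}_n\cong S_n\ltimes(\mathbb{Z}[t^{\pm1}]^\times)^{n}$, and since $\mathbb{Z}[t^{\pm1}]^\times=\{\pm t^{k}\}$ is abelian, the diagonal subgroup is an abelian subgroup of index $n!$. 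Thus $\rho_S(B_n)$ is virtually abelian and therefore contains no nonabelian free subgroup, whereas $B_n$ does for $n\ge 3$ (for instance $P_3\cong F_2\times\mathbb{Z}$ embeds in $P_n\le B_n$); so $\rho_S$ cannot be injective. The remaining task is to make the kernel element explicit.

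To do that I would use that diagonal matrices in $\mathrm{Mon}_n$ all commute. The square of the $2\times2$ block $\left(\begin{smallmatrix}0&t\\1&0\end{smallmatrix}\right)$ is $tI_2$, so $\rho_S(\sigma_1^{2})=\operatorname{diag}(t,t,1,\dots,1)$ and $\rho_S(\sigma_2^{2})=\operatorname{diag}(1,t,t,1,\dots,1)$ are both diagonal, hence commute; consequently the commutator $w:=[\sigma_1^{2},\sigma_2^{2}]=\sigma_1^{2}\sigma_2^{2}\sigma_1^{-2}\sigma_2^{-2}$ satisfies $\rho_S(w)=I_n$. It then remains only to check $w\neq 1$ in $B_n$. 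Since $\langle\sigma_1,\sigma_2\rangle\le B_n$ is a copy of $B_3$, it suffices to verify $w\neq 1$ in $B_3$, and I would do this by pushing $w$ through the classical surjection $B_3\to SL_2(\mathbb{Z})$, $\sigma_1\mapsto\left(\begin{smallmatrix}1&1\\0&1\end{smallmatrix}\right)$, $\sigma_2\mapsto\left(\begin{smallmatrix}1&0\\-1&1\end{smallmatrix}\right)$: the images of $\sigma_1^{2}$ and $\sigma_2^{2}$ fail to commute, so $w$ has nontrivial image and thus $w\neq 1$. This yields $1\neq w\in\ker\rho_S$ for every $n\ge 3$.

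All the matrix arithmetic here is routine; the only step that needs an actual argument rather than a computation is the nontriviality of $w$, and I expect that to be the main (minor) obstacle. Besides the $SL_2(\mathbb{Z})$ route above, one could instead invoke that $P_3/Z(P_3)$ is free of rank two on the classes of $\sigma_1^{2}$ and $\sigma_2^{2}$, so that their commutator is a nontrivial element of $P_3$ and a fortiori of $B_n$. Either way, exhibiting one concrete nonidentity element of the kernel suffices to conclude that $\rho_S$ is unfaithful for all $n\ge 3$.
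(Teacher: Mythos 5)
Your proposal is correct, and all the computations check out: $\rho_S(\sigma_1^2)=\operatorname{diag}(t,t,1,\dots,1)$ and $\rho_S(\sigma_2^2)=\operatorname{diag}(1,t,t,1,\dots,1)$ do commute, so $w=[\sigma_1^2,\sigma_2^2]$ lies in $\ker\rho_S$, and the surjection $B_3\to SL_2(\mathbb{Z})$ (or the standard fact that $A_{12}=\sigma_1^2$ and $A_{23}=\sigma_2^2$ generate a free subgroup of $P_3$) shows $w\neq 1$. Note, however, that the paper does not prove this statement at all: it is imported as a black box from Blanchet--Marin \cite{C.B}, so there is no internal proof to compare against. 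Your argument is therefore a genuine addition --- a short, self-contained, elementary proof. It is also worth pointing out that your structural observation proves more than the theorem as stated: since each $\rho_S(\sigma_i)$ is monomial and the composite $B_n\to \mathrm{Mon}_n\to S_n$ is the standard projection, the pure braid group $P_n$ lands in the diagonal (abelian) subgroup of $\mathrm{Mon}_n$, so the entire commutator subgroup $[P_n,P_n]$ lies in $\ker\rho_S$. That is exactly the stronger kernel statement the paper later quotes from \cite{C.B} in Section 4.2, and your element $w=[\sigma_1^2,\sigma_2^2]=[A_{12},A_{23}]$ is just the simplest nontrivial member of it. The only cosmetic redundancy is that you give two independent routes (virtual abelianity of the image versus the explicit kernel element); the explicit element alone suffices and is the cleaner of the two.
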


\vspace*{0.1cm}

In order to define new representations of any algebraic structure that extends $B_n$, it is naturally to start by extending the known representations of $B_n$ to these algebraic structures. We may have many different ways of extensions. In \cite{M.N20241}, M. Nasser considered two types of extensions of braid group representations to $SM_n$ in order to compare between them. The first type is the local extension and the second type is the $\Phi$-type extension. He made his study on the three known representations of $B_n$ defined above: Burau representation, the $F$-representation and the standard representation. This was a road toward answering the following question.

\begin{question} \label{qqq}
Let $\rho: B_n \rightarrow GL_{m}(\mathbb{Z}[t^{\pm 1}])$ be a representation. Is there a relation between the types of extensions of $\rho$ to any algebraic structure that extends $B_n$?
\end{question}

In the next section, we answer this question for the standard representation $\rho_S$. 

\section{Extensions of the Standard Representation $\rho_s$ of $B_n$ to $SB_n$} %%%%%%%%%% Section 3 %%%%%%%%%%%%%

In this section, we aim to determine the form of all possible representations $\rho'_S: SB_n\to GL_n(\mathbb{Z}[t^{\pm 1}])$ that extend the standard representation $\rho_S: B_n\to GL_n(\mathbb{Z}[t^{\pm 1}])$, for all $n\geq 2$. We start in the following theorem with $n=2$, which is a special case.

\begin{theorem} \label{ThSB2}
Let $t$ be indeterminate and let $\rho'_S: SB_2\to GL_2(\mathbb{Z}[t^{\pm 1}])$ be a representation that extends the standard representation $\rho_S: B_2\to GL_2(\mathbb{Z}[t^{\pm 1}])$. Then, $\rho'_S$ acts on the generators $\sigma_1$ and $\tau_1$ of $SB_2$ as follows.
$$\rho'_S(\sigma_1)=\rho_S(\sigma_1)=\left(
  \begin{matrix}
   	0 & t\\
   	1 & 0\\
\end{matrix} \right) \text{ and } \rho'_S(\tau_1)=\left(
  \begin{matrix}
   	a & ct\\
   	c & a\\
\end{matrix} \right), \text{ where } a,c \in \mathbb{Z}[t^{\pm 1}].$$ 
\end{theorem}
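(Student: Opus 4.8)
The plan is to use the defining relations of $SB_2$ together with the requirement that $\rho'_S$ restricts to $\rho_S$ on $B_2$. The group $SB_2$ is generated by $\sigma_1$ and $\tau_1$, and since there is only one index, the only relations among them are relation (\ref{eqs5}), i.e.\ $\tau_1\sigma_1 = \sigma_1\tau_1$ (the braid relations (\ref{eqs1}) and the commuting relations (\ref{eqs2}), (\ref{eqs3}), (\ref{eqs4}), (\ref{eqs6}), (\ref{eqs7}) are vacuous for $n=2$). So the whole content of the theorem is: an extension is the same as a choice of invertible matrix $\rho'_S(\tau_1) \in GL_2(\mathbb{Z}[t^{\pm1}])$ that commutes with $\rho_S(\sigma_1) = \left(\begin{smallmatrix} 0 & t \\ 1 & 0 \end{smallmatrix}\right)$.

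First I would set $\rho'_S(\sigma_1) = \rho_S(\sigma_1)$, which is forced since $\rho'_S$ extends $\rho_S$. Then I would write $\rho'_S(\tau_1) = \left(\begin{smallmatrix} a & b \\ c & d \end{smallmatrix}\right)$ with entries in $\mathbb{Z}[t^{\pm1}]$ and impose $\rho'_S(\tau_1)\rho_S(\sigma_1) = \rho_S(\sigma_1)\rho'_S(\tau_1)$. Computing both products: the left side is $\left(\begin{smallmatrix} b & at \\ d & ct \end{smallmatrix}\right)$ and the right side is $\left(\begin{smallmatrix} ct & dt \\ a & b \end{smallmatrix}\right)$. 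Equating entrywise gives $b = ct$, $at = dt$, $d = a$, and $ct = b$; since $t$ is indeterminate (a non-zero-divisor), $at = dt$ yields $d = a$, and the remaining equations give $b = ct$. Hence $\rho'_S(\tau_1) = \left(\begin{smallmatrix} a & ct \\ c & a \end{smallmatrix}\right)$, exactly the claimed form, with $a, c \in \mathbb{Z}[t^{\pm1}]$ free.

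The only remaining point is to confirm that no further constraint is hidden: $\rho'_S(\tau_1)$ must lie in $GL_2(\mathbb{Z}[t^{\pm1}])$ because $\tau_1$ is invertible in $SB_2$ (recall $SM_2$ embeds in the group $SB_2$). This does not further restrict the \emph{form} of the matrix — it only restricts the pair $(a,c)$ to those for which $\det = a^2 - c^2 t$ is a unit of $\mathbb{Z}[t^{\pm1}]$ — so the statement as written, which only asserts the shape of $\rho'_S(\tau_1)$ with $a,c \in \mathbb{Z}[t^{\pm1}]$, is correct. Conversely, any such matrix commuting with $\rho_S(\sigma_1)$ and invertible gives a well-defined homomorphism on $SB_2$, since the single relation is satisfied by construction; thus the characterization is both necessary and sufficient.

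This argument is essentially a direct computation, so there is no serious obstacle; the only thing to be careful about is invoking that $t$ is indeterminate (hence not a zero divisor) when cancelling it from $at = dt$, and noting that for $n=2$ the presentation of $SB_2$ collapses to the single relation $\sigma_1\tau_1 = \tau_1\sigma_1$, so that the braid and far-commutation relations impose nothing. I would also remark that this case must be treated separately from $n \geq 3$ precisely because for $n = 2$ there is no relation of type (\ref{eqs6})–(\ref{eqs7}) linking $\tau_i$ to $\tau_{i+1}$, which is what will pin down the singular generators more rigidly in the general case handled by Theorem \ref{ThSB3}.
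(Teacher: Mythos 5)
Your proposal is correct and follows essentially the same route as the paper: force $\rho'_S(\sigma_1)=\rho_S(\sigma_1)$, write $\rho'_S(\tau_1)$ as a general $2\times 2$ matrix, impose the single relation $\sigma_1\tau_1=\tau_1\sigma_1$, and solve the resulting linear system to get $b=ct$ and $d=a$. Your additional remarks on invertibility of $\rho'_S(\tau_1)$ and on sufficiency of the commutation condition are correct but go slightly beyond what the paper's proof records.
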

\begin{proof}
As $\rho'_S$ extends $\rho_S$, we get $\rho'_S(\sigma_1)=\rho_S(\sigma_1)=\left(
  \begin{matrix}
   	0 & t\\
   	1 & 0\\
\end{matrix} \right)$. Regarding the generator $\tau_1$ of $SB_2$, set $\rho'_S(\tau_1)=\left(
  \begin{matrix}
   	a & b\\
   	c & d\\
\end{matrix} \right), \text{ where } a,b,c,d \in \mathbb{Z}[t^{\pm 1}].$ The only relation between the generators of $SB_2$ is: $\sigma_1\tau_1=\tau_1\sigma_1,$ which implies that $$\rho'_S(\sigma_1)\rho'_S(\tau_1)=\rho'_S(\tau_1)\rho'_S(\sigma_1).$$ Applying this equality we get the following system of equations.
\begin{equation}
b-ct=0
\end{equation}
\begin{equation}
at-dt=0
\end{equation} 
\begin{equation}
a-d=0
\end{equation}
Solving this system of equations implies that $a=d$ and $b=ct$, as required.
\end{proof}

Now, we generalize the result of the previous theorem to $SB_n$ for all $n\geq 3$.

\begin{theorem} \label{ThSB3}
Consider $n\geq 3$ and let $t$ be indeterminate. Let $\rho'_S: SB_n\to GL_n(\mathbb{Z}[t^{\pm 1}])$ be a representation that extends the standard representation $\rho_S: B_n\to GL_n(\mathbb{Z}[t^{\pm 1}])$. Then, $\rho'_S$ acts on the generators $\sigma_i$ and $\tau_i$, $1\leq i \leq n-1,$ of $SB_n$ as follows.
$$\rho'_S(\sigma_i)=\rho_S(\sigma_i)=\left( \begin{array}{c|@{}c|c@{}}
   \begin{matrix}
     I_{i-1} 
   \end{matrix} 
      & 0 & 0 \\
      \hline
    0 &\hspace{0.2cm} \begin{matrix}
   	0 & t\\
   	1 & 0\\
\end{matrix}  & 0  \\
\hline
0 & 0 & I_{n-i-1}
\end{array} \right)$$
and
$$\rho'_S(\tau_i)=\left( \begin{array}{c|@{}c|c@{}}
   \begin{matrix}
     I_{i-1} 
   \end{matrix} 
      & 0 & 0 \\
      \hline
    0 &\hspace{0.2cm} \begin{matrix}
   	a & ct\\
   	c & a\\
\end{matrix}  & 0  \\
\hline
0 & 0 & I_{n-i-1}
\end{array} \right),$$
where $a,c \in \mathbb{Z}[t^{\pm 1}]$.
\end{theorem}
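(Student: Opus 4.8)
The plan is to extend the argument of Theorem \ref{ThSB2}. Since $\rho'_S$ extends $\rho_S$, the matrices $\rho'_S(\sigma_i)=\rho_S(\sigma_i)$ are prescribed, so the problem is to pin down $T_i:=\rho'_S(\tau_i)\in GL_n(\mathbb{Z}[t^{\pm1}])$ from the defining relations of $SB_n$ that involve the $\tau$'s. I would first fix the shape of $T_1$ from the commutativity relations, then use the braid-type relation (\ref{eqs6}) to write every other $T_i$ as an explicit conjugate of $T_1$, and finally check that the relations not yet used impose nothing beyond one residual scalar.

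Step 1 (shape of $T_1$). By (\ref{eqs5}) and (\ref{eqs4}), $T_1$ commutes with $\rho_S(\sigma_1)$ and with $\rho_S(\sigma_j)$ for all $j\geq 3$. Writing $P=\left(\begin{smallmatrix}0&t\\1&0\end{smallmatrix}\right)$ for the nontrivial block of $\rho_S(\sigma_1)$ and splitting $T_1$ into a block on coordinates $\{1,2\}$ and a complementary block on $\{3,\dots,n\}$, commutation with $\rho_S(\sigma_1)$ kills the two mixed blocks, because $P-I$ has determinant $1-t\neq 0$ and is therefore injective over $\mathbb{Q}(t)$; the $2\times 2$ block then commutes with $P$, hence equals $\left(\begin{smallmatrix}a&ct\\c&a\end{smallmatrix}\right)$. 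The complementary block commutes with a copy of the standard representation of $B_{n-2}$; for $n\geq 4$ that representation is irreducible by Theorem \ref{irred} (hence absolutely irreducible over $\mathbb{Q}(t)$, since $t$ is an indeterminate and so $t\neq 1$), and Schur's lemma forces the block to be $\lambda I_{n-2}$ with $\lambda\in\mathbb{Z}[t^{\pm1}]$; for $n=3$ this block is $1\times 1$ and is a scalar $\lambda$ for free. Hence $T_1=\mathrm{diag}\!\left(\left(\begin{smallmatrix}a&ct\\c&a\end{smallmatrix}\right),\lambda I_{n-2}\right)$.

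Step 2 (propagation, homogeneity, consistency). Relation (\ref{eqs6}) reads $T_{i+1}=\rho_S(\sigma_i\sigma_{i+1})\,T_i\,\rho_S(\sigma_i\sigma_{i+1})^{-1}$ for $i=1,\dots,n-2$, so all $T_i$ are determined by $T_1$. Since $\rho_S(\sigma_i\sigma_{i+1})$ carries $\mathrm{span}(e_i,e_{i+1})$ onto $\mathrm{span}(e_{i+1},e_{i+2})$ and fixes every other $e_k$, a direct computation shows that this conjugation shifts the active $2\times 2$ block one coordinate to the right without changing it and restores $\lambda I$ on the complement; by induction $T_i=\mathrm{diag}\!\left(\lambda I_{i-1},\left(\begin{smallmatrix}a&ct\\c&a\end{smallmatrix}\right),\lambda I_{n-i-1}\right)$ for every $i$. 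It then remains to verify that the relations not yet used---(\ref{eqs7}), the relations (\ref{eqs3}) among the $\tau$'s, and the relations (\ref{eqs4})--(\ref{eqs5}) for $\tau_i$ with $i>1$---hold for these matrices; each reduces to the commutation of $T_i$ with an explicit diagonal matrix (for example, compatibility of (\ref{eqs6}) with (\ref{eqs7}) amounts to $T_i$ commuting with $\rho_S(\sigma_{i+1}\sigma_i^{2}\sigma_{i+1})=\mathrm{diag}(I_{i-1},tI_2,t^2,I_{n-i-2})$), and these commutations are immediate since $T_i$ is scalar away from its central $2\times 2$ block while the diagonal matrices involved are scalar on that block.

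At this point every $T_i$ equals $\mathrm{diag}\!\left(\lambda I_{i-1},\left(\begin{smallmatrix}a&ct\\c&a\end{smallmatrix}\right),\lambda I_{n-i-1}\right)$, and the remaining point---that the identity blocks are genuinely $I$ rather than $\lambda I$ for some nonzero scalar---is the step I expect to be the main obstacle, since none of the relations used above detects $\lambda$. This is immediate if one takes the extension to be local in the sense of Definition \ref{LocalSMn} (there the identity blocks are built into the form of $\rho'_S(\tau_i)$); without that hypothesis one must extract a finer rigidity, presumably by combining the constraint that $\rho'_S$ land in $GL_n(\mathbb{Z}[t^{\pm1}])$ (so $\det T_i=\lambda^{\,n-2}(a^{2}-c^{2}t)$ is a unit) with a closer look at how $\lambda$ enters (\ref{eqs6})--(\ref{eqs7}). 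Making this last reduction airtight is the delicate part of the argument and the place where I would expect most of the effort to go.
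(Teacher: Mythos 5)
Your Steps 1--2 are sound and amount to a more conceptual version of what the paper actually does: for $n=3$ the paper writes $\rho'_S(\tau_1),\rho'_S(\tau_2)$ as unknown $3\times 3$ matrices, expands the four relations involving the $\tau_i$ into thirty-two scalar equations, and solves them, arriving at the same normal form you get, namely $T_i=\mathrm{diag}\bigl(\lambda I_{i-1},M,\lambda I_{n-i-1}\bigr)$ with $M=\left(\begin{smallmatrix}a&ct\\c&a\end{smallmatrix}\right)$. One local repair to Step 1: for $n=4$ the complementary block of $T_1$ is constrained only to commute with the single matrix $\left(\begin{smallmatrix}0&t\\1&0\end{smallmatrix}\right)$, whose commutant is two-dimensional, so Theorem \ref{irred} and Schur's lemma do not apply there (a representation of $B_2$ on a $2$-dimensional space is never absolutely irreducible); the off-scalar part is instead killed by exactly the consistency condition you describe, commutation of $T_i$ with $\rho_S(\sigma_{i+1}\sigma_i^2\sigma_{i+1})=\mathrm{diag}(I_{i-1},t,t,t^2,I_{n-i-2})$, which decouples coordinate $i+2$ from the rest of the complement. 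That is a fixable slip, not the real problem.

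The real problem is the one you flag in your last paragraph, and you are right that it is not a formality: the scalar $\lambda$ cannot be eliminated, because $T_i=\mathrm{diag}\bigl(\lambda I_{i-1},M,\lambda I_{n-i-1}\bigr)$ satisfies \emph{every} defining relation of $SB_n$ for an arbitrary unit $\lambda\in\mathbb{Z}[t^{\pm 1}]$: $M$ commutes with $\left(\begin{smallmatrix}0&t\\1&0\end{smallmatrix}\right)$, disjointly supported blocks meet only scalar blocks, and conjugation by $\rho_S(\sigma_i\sigma_{i+1})$ carries $\mathrm{diag}(\lambda I_{i-1},M,\lambda I_{n-i-1})$ exactly to $\mathrm{diag}(\lambda I_{i},M,\lambda I_{n-i-2})$ with the same $\lambda$ (and similarly for relation (\ref{eqs7})). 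Concretely, $\rho'_S(\tau_i)=\mathrm{diag}(tI_{i-1},I_2,tI_{n-i-1})$ defines a genuine extension of $\rho_S$ to $SB_n$ that lies outside the asserted normal form, so the theorem as stated needs either the additional hypothesis that the extension be local in the sense of Definition \ref{LocalSMn} or the extra parameter $\lambda$ in its conclusion. The paper's own proof has precisely the same gap: its equations (\ref{eqs27}) and (\ref{eqs40}) yield only $a_2=i_1$, no equation in the list forces that common corner entry to equal $1$, and the final sentence silently substitutes $1$ for it. So your proposal does not close the argument, but the missing step is not one you could have supplied --- it is a missing hypothesis in the statement, and your instinct that ``none of the relations used above detects $\lambda$'' is exactly correct.
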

\begin{proof}
As the proof is computational and similar, we give the proof for the case $n=3$, and the proof could be generalized for all $n>3$ in a similar procedure.

Now, since $\rho'_S$ is an extension of $\rho_S$, we get that $\rho'_S(\sigma_i)=\rho_S(\sigma_i)$ for $i=1,2$. Regarding the generators $\tau_1$ and $\tau_2$ of $SB_3$, set $$\rho'_S(\tau_1)=\left(
  \begin{matrix}
   	a_1 & b_1 & c_1\\
   	d_1 & e_1 & f_1\\
   	g_1 & h_1 & i_1\\
\end{matrix} \right) \text{ and } \rho'_S(\tau_2)=\left(
  \begin{matrix}
   	a_2 & b_2 & c_2\\
   	d_2 & e_2 & f_2\\
   	g_2 & h_2 & i_2\\
\end{matrix} \right),$$
where $a_1,b_1,c_1,d_1,e_1,f_1,g_1,h_1,i_1,a_2,b_2,c_2,d_2,e_2,f_2,g_2,h_2,i_2 \in \mathbb{Z}[t^{\pm 1}]$.
The relations of the generators of $SB_3$ that contains the generators $\tau_i$, $1\leq i \leq 2$, are:
$$\sigma_1\tau_1=\tau_1\sigma_1,$$ 
$$\sigma_2\tau_2=\tau_2\sigma_2,$$ 
$$\sigma_1\sigma_2\tau_1=\tau_2\sigma_1\sigma_2,$$
$$\sigma_2\sigma_1\tau_2=\tau_1\sigma_2\sigma_1.$$
Applying these relations of the images of generators of $SB_3$ under $\rho'_S$ implies the following system of thirty two equations and eighteen unknowns.
\begin{equation}\label{eqs11}
-b_1+d_1t=0
\end{equation}
\begin{equation}\label{eqs12}
-a_1t+e_1t=0
\end{equation}
\begin{equation}\label{eqs13}
-c_1+f_1t=0
\end{equation}
\begin{equation}\label{eqs14}
a_1-e_1=0
\end{equation}
\begin{equation}\label{eqs15}
c_1-f_1=0
\end{equation}
\begin{equation}\label{eqs16}
g_1-h_1=0
\end{equation}
\begin{equation}\label{eqs17}
h_1-g_1t=0
\end{equation}
\begin{equation}\label{eqs18}
b_2-c_2=0
\end{equation}
\begin{equation}\label{eqs19}
c_2-b_2t=0
\end{equation}
\begin{equation}\label{eqs20}
-d_2+g_2t=0
\end{equation}
\begin{equation}\label{eqs21}
-f_2+h_2t=0
\end{equation}
\begin{equation}\label{eqs22}
-e_2t+i_2t=0
\end{equation}
\begin{equation}\label{eqs23}
d_2-g_2=0
\end{equation}
\begin{equation}\label{eqs24}
e_2-i_2=0
\end{equation}
\begin{equation}\label{eqs25}
-b_2+g_1t^2=0
\end{equation}
\begin{equation}\label{eqs26}
-c_2+h_1t^2=0
\end{equation}
\begin{equation}\label{eqs27}
-a_2t^2+i_1t^2=0
\end{equation}
\begin{equation}\label{eqs28}
a_1-e_2=0
\end{equation}
\begin{equation}\label{eqs29}
b_1-f_2=0
\end{equation}
\begin{equation}\label{eqs30}
c_1-d_2t^2=0
\end{equation}
\begin{equation}\label{eqs31}
d_1-h_2=0
\end{equation}   
\begin{equation}\label{eqs32}
e_1-i_2=0
\end{equation}  
\begin{equation}\label{eqs33}
f_1-g_2t^2=0
\end{equation}
\begin{equation}\label{eqs34}
-c_1+d_2t=0
\end{equation}
\begin{equation}\label{eqs35}
-a_1t+e_2t=0
\end{equation}
\begin{equation}\label{eqs36}
-b_1t+f_2t=0
\end{equation}
\begin{equation}\label{eqs37}
-f_1+g_2t=0
\end{equation}
\begin{equation}\label{eqs38}
-d_1t+h_2t=0
\end{equation}
\begin{equation}\label{eqs39}
-e_1t+i_2t=0
\end{equation}
\begin{equation}\label{eqs40}
a_2-i_1=0
\end{equation}
\begin{equation}\label{eqs41}
b_2-g_1t=0
\end{equation}
\begin{equation}\label{eqs42}
c_2-h_1t=0
\end{equation}
We solve this system as follows.
\begin{itemize}
\item Equation (\ref{eqs11}) implies that $b_1=d_1t$.\vspace*{0.05cm}
\item Equations (\ref{eqs12}), (\ref{eqs14}), (\ref{eqs28}), (\ref{eqs32}) and (\ref{eqs39}) implies that $a_1=e_1=e_2=i_2$.\vspace*{0.05cm}
\item Equations (\ref{eqs13}) and (\ref{eqs15}) implies that $c_1=f_1=0$.\vspace*{0.05cm}
\item Equations (\ref{eqs16}) and (\ref{eqs17}) implies that $g_1=h_1=0$.\vspace*{0.05cm}
\item Equations (\ref{eqs18}) and (\ref{eqs19}) implies that $b_2=c_2=0$.\vspace*{0.05cm}
\item Equations (\ref{eqs20}) and (\ref{eqs23}) implies that $d_2=g_2=0$.\vspace*{0.05cm}
\item Equation (\ref{eqs21}) implies that $f_2=h_2t$.\vspace*{0.05cm}
\item Equations (\ref{eqs22}) and (\ref{eqs24}) implies that $e_2=i_2$.\vspace*{0.05cm}
\item Equations (\ref{eqs27}) and (\ref{eqs40}) implies that $a_2=i_1$.\vspace*{0.05cm}
\item Equations (\ref{eqs29}) and (\ref{eqs36}) implies that $b_1=f_2$.\vspace*{0.05cm}
\item Equations (\ref{eqs31}) and (\ref{eqs38})implies that $d_1=h_2$.
\end{itemize}
Now, substituting these obtained values in $\rho'_S(\tau_1)$ and $\rho'_S(\tau_2)$ completes the proof.
\end{proof}

\section{On the Irreducibility and the Faithfulness of the Representations of the Form $\rho'_s$ of $SB_n$} %%%%%%%%%% Section 4 %%%%%%%%%%%%%

\subsection{On the Irreducibility of $\rho'_s$}
In the first part of this section, we specify $t$ to be a non-zero complex number. Recall that, according to Theorem \ref{irred}, the standard representation $\rho_S: B_n\to GL_n(\mathbb{C})$ is irreducible if and only if $t\neq 1$. In this subsection, we find necessary and sufficient conditions for the irreducibility of the representations of the form $\rho'_S: SB_n \to GL_n(\mathbb{C})$ determined in Section 3. First, we give two lemmas that helps us in our proof.

\begin{lemma} \label{lemma0}
Consider $n\geq 2$ and let $\rho: G \to GL_n(V)$ be a representation of a group $G$ on a vector space $V$. Let $H$ be any subgroup of $G$. If $\rho: H \to GL_n(V)$ is irreducible, then, $\rho: G \to GL_n(V)$ is also irreducible.
\end{lemma}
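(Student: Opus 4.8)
The plan is to prove the contrapositive in the natural direction: an $H$-invariant subspace is automatically $G$-invariant only if $H=G$, so instead I would show that if $\rho$ restricted to $G$ is \emph{reducible}, then its restriction to $H$ is reducible too — wait, that is false in general. The correct statement to prove is exactly what is asked: irreducibility of the restriction to the \emph{smaller} group $H$ forces irreducibility of the whole group $G$. So I would argue directly. Suppose $W \subseteq V$ is a subspace invariant under $\rho(G)$, i.e. $\rho(g)W \subseteq W$ for all $g \in G$. Since $H \subseteq G$, in particular $\rho(h)W \subseteq W$ for all $h \in H$, so $W$ is invariant under $\rho(H)$ as well. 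By the assumed irreducibility of $\rho \colon H \to GL_n(V)$, the only such subspaces are $W = 0$ and $W = V$. Hence the only $\rho(G)$-invariant subspaces of $V$ are the trivial ones, which is precisely the statement that $\rho \colon G \to GL_n(V)$ is irreducible.

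The key steps, in order, are: (1) unwind the definition of irreducibility given earlier in the paper — no proper nonzero subspace of $V$ is invariant under the representation; (2) take an arbitrary $G$-invariant subspace $W$; (3) observe that $G$-invariance trivially implies $H$-invariance because $\rho(H) \subseteq \rho(G)$; (4) invoke the hypothesis on $H$ to conclude $W \in \{0, V\}$; (5) conclude. Each step is a one-liner, so there is essentially nothing to "grind through."

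I do not expect any real obstacle here — the lemma is an elementary observation and its proof is a couple of sentences. The only thing to be slightly careful about is the convention for what "invariant" and "irreducible" mean (the paper's Introduction uses "no non-trivial subspace of $V$ which is invariant," so I should match that phrasing), and to note that the statement is used later with $G = SB_n$ and $H = B_n$, so the point is simply that checking irreducibility on the braid subgroup suffices. If one wanted a marginally slicker phrasing, one could say: the lattice of $\rho(G)$-invariant subspaces is contained in the lattice of $\rho(H)$-invariant subspaces; if the latter is $\{0,V\}$ then so is the former. That is the whole argument.
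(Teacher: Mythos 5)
Your argument is correct and is essentially the paper's own proof: the paper phrases it as a proof by contradiction (assume a non-trivial $G$-invariant subspace $S$ exists, note it is $H$-invariant, contradiction), while you phrase the same observation directly, which is if anything slightly cleaner. The opening sentence about the contrapositive is a false start that you correctly abandon, but the actual argument that follows is complete and matches the paper.
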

\begin{proof}
Suppose that $\rho: G \to GL_n(V)$ is reducible, then, there exists an invariant subspace $S$ of $V$ such that $\rho(g)s\in S$ for every $s\in S$ and $g\in G$. This implies that $\rho(h)s\in S$ for every $s\in S$ and $h\in H\subset G$. Hence, $S$ is an invariant subspace of $V$ with respect to $\rho: H \to GL_n(V)$, which is a contradiction.
\end{proof}

\begin{lemma} \label{lemma}
Let $\rho'_S: SB_n \to GL_n(\mathbb{C})$ be an extension of the standard representation of $B_n$ determined in Section 3 evaluated at $t=1$. Let $S$ be a subspace of $\mathbb{C}^n$ that is invariant under $\rho'_S$. If there exists $1\leq i \leq n$ such that $e_i \in S$, where $\{e_1,e_2,\ldots, e_n\}$ is the canonical basis of $\mathbb{C}^n$, then $S=\mathbb{C}^n$.
\end{lemma}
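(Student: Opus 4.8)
\emph{Proof proposal.} The plan is to exploit the fact that at $t=1$ the braid generators act by genuine permutation matrices, so that the subgroup of $GL_n(\mathbb{C})$ they generate already moves the canonical basis around transitively; the singular generators $\tau_i$ will not even be needed.

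First I would note that, since $\rho'_S$ extends $\rho_S$, the restriction of $\rho'_S$ to the subgroup $B_n \le SB_n$ is exactly $\rho_S$ specialized at $t=1$. In particular $S$, being invariant under $\rho'_S$, is invariant under each $\rho_S(\sigma_i)$. At $t=1$ the $2\times 2$ block $\left(\begin{smallmatrix}0 & t\\ 1 & 0\end{smallmatrix}\right)$ becomes $\left(\begin{smallmatrix}0 & 1\\ 1 & 0\end{smallmatrix}\right)$, so $\rho_S(\sigma_i)$ is precisely the permutation matrix of the transposition $(i\ \ i{+}1)$ on the canonical basis: $\rho_S(\sigma_i)e_i = e_{i+1}$, $\rho_S(\sigma_i)e_{i+1} = e_i$, and $\rho_S(\sigma_i)e_j = e_j$ for $j \notin \{i, i+1\}$.

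Next I would invoke the elementary fact that the adjacent transpositions $(1\ 2), (2\ 3), \ldots, (n{-}1\ \ n)$ generate the full symmetric group $S_n$, which acts transitively on $\{1, 2, \ldots, n\}$. Hence for any pair of indices $i, j$ there is a word $w$ in $\sigma_1, \ldots, \sigma_{n-1}$ with $\rho'_S(w)\, e_i = e_j$. Therefore, if $e_i \in S$ for some $i$, the invariance of $S$ forces $e_j \in S$ for all $j$, so $S$ contains the canonical basis and $S = \mathbb{C}^n$.

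I do not expect a real obstacle here: the content is simply that the specialization $t=1$ collapses the standard representation of $B_n$ onto the permutation representation of $S_n$, which is generated by a single orbit. The only points requiring care are the bookkeeping ones — recording that invariance under $SB_n$ restricts to invariance under the $B_n$-subgroup, and citing that adjacent transpositions generate $S_n$ — neither of which is substantive.
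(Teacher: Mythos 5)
Your proposal is correct and is essentially the paper's own argument: the paper also uses only the $\sigma_i$, observing that at $t=1$ each $\rho'_S(\sigma_j)$ swaps $e_j$ and $e_{j+1}$, and then chains these swaps (in an explicit three-case analysis on the position of $i$) to reach every basis vector. Your phrasing via transitivity of the symmetric group generated by adjacent transpositions is just a cleaner packaging of the same idea.
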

\begin{proof}
Recall that the representation $\rho'_S: SB_n \to GL_n(\mathbb{C})$ evaluated at $t=1$ becomes as follows.
$$\rho'_S(\sigma_i)=\left( \begin{array}{c|@{}c|c@{}}
   \begin{matrix}
     I_{i-1} 
   \end{matrix} 
      & 0 & 0 \\
      \hline
    0 &\hspace{0.2cm} \begin{matrix}
   	0 & 1\\
   	1 & 0\\
\end{matrix}  & 0  \\
\hline
0 & 0 & I_{n-i-1}
\end{array} \right)$$
and
$$\rho'_S(\tau_i)=\left( \begin{array}{c|@{}c|c@{}}
   \begin{matrix}
     I_{i-1} 
   \end{matrix} 
      & 0 & 0 \\
      \hline
    0 &\hspace{0.2cm} \begin{matrix}
   	a & c\\
   	c & a\\
\end{matrix}  & 0  \\
\hline
0 & 0 & I_{n-i-1}
\end{array} \right),$$
where $a,c \in \mathbb{C}$ with $a^2-c^2 \neq 0$. We consider the following cases.
\begin{itemize}
\item[(1)] \underline{Case $i=1$}: If $e_1\in S$, then $\rho'_S(\sigma_1)(e_1)=e_2\in S$, and so, $\rho'_S(\sigma_2)(e_2)=e_3\in S$. Continue in this way, we get that $e_i\in S$ for all $1\leq i \leq n$, and so, $S=\mathbb{C}^n$, as required.
\item[(2)] \underline{Case $i=n$}: If $e_n\in S$, then $\rho'_S(\sigma_{n-1})(e_n)=e_{n-1}\in S$, and so, $\rho'_S(\sigma_{n-2})(e_{n-1})=e_{n-2}\in S$. Continue in this way, we get again that $e_i\in S$ for all $1\leq i \leq n$, and so, $S=\mathbb{C}^n$, as required.
\item[(3)] \underline{Case $1<i<n$}: If $e_i\in S$ in this case, then $\rho'_S(\sigma_i)(e_i)=e_{i+1}\in S$, and so, $\rho'_S(\sigma_{i+1})(e_{i+1})=e_{i+2}\in S$. Continue in this way we get that $e_j\in S$ for all $i\leq j \leq n$. On the other hand, we have $\rho'_S(\sigma_{i-1})(e_i)=e_{i-1}\in S$, and so, $\rho'_S(\sigma_{i-2})(e_{i-1})=e_{i-2}\in S$. Continue in this way, we get that $e_i\in S$ for all $1\leq j \leq i$. Therefore, $e_i\in S$ for all $1\leq i \leq n$, and so, $S=\mathbb{C}^n$, as required.
\end{itemize}
\end{proof}

\begin{theorem} \label{irredd}
Let $\rho'_S: SB_n \to GL_n(\mathbb{C})$ be an extension of the standard representation of $B_n$ determined in Section 3. Recall that
$$\rho'_S(\sigma_i)=\left( \begin{array}{c|@{}c|c@{}}
   \begin{matrix}
     I_{i-1} 
   \end{matrix} 
      & 0 & 0 \\
      \hline
    0 &\hspace{0.2cm} \begin{matrix}
   	0 & t\\
   	1 & 0\\
\end{matrix}  & 0  \\
\hline
0 & 0 & I_{n-i-1}
\end{array} \right)$$
and
$$\rho'_S(\tau_i)=\left( \begin{array}{c|@{}c|c@{}}
   \begin{matrix}
     I_{i-1} 
   \end{matrix} 
      & 0 & 0 \\
      \hline
    0 &\hspace{0.2cm} \begin{matrix}
   	a & ct\\
   	c & a\\
\end{matrix}  & 0  \\
\hline
0 & 0 & I_{n-i-1}
\end{array} \right),$$
where $a,c \in \mathbb{C}$ with $a^2-tc^2 \neq 0$. Then, the following holds true.
\begin{itemize}
\item[(1)] If $t\neq 1$, then $\rho'_S$ is irreducible.
\item[(2)] If $t=1$, then $\rho'_S$ is irreducible if and only if $a+c\neq 1.$
\end{itemize}
\end{theorem}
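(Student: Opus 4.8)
The plan is to handle the two cases of the statement separately; the first is immediate and the second carries the real content. For part (1), when $t\neq 1$, note that $B_n$ is a subgroup of $SB_n$ and that $\rho'_S$ restricts on $B_n$ to the standard representation $\rho_S$, which is irreducible by Theorem~\ref{irred}; Lemma~\ref{lemma0}, applied with $G=SB_n$ and $H=B_n$, then immediately yields that $\rho'_S\colon SB_n\to GL_n(\mathbb{C})$ is irreducible, with no further computation.

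For part (2) we set $t=1$. Then each $\rho'_S(\sigma_i)$ is the permutation matrix of the transposition $(i\ i{+}1)$, so $\rho'_S|_{B_n}$ factors through the symmetric group and is the permutation representation of $S_n$ on $\mathbb{C}^n=\langle e_1,\dots,e_n\rangle$, whose $S_n$-submodules are exactly $0$, the line $U=\langle\mathbf 1\rangle$ with $\mathbf 1=e_1+\cdots+e_n$, the hyperplane $W=\{x:\sum_jx_j=0\}$, and $\mathbb{C}^n$ (the standard summand $W$ being irreducible and non-isomorphic to the trivial summand $U$). Hence every nonzero $\rho'_S$-invariant subspace $S$ is one of $U$, $W$, $\mathbb{C}^n$, and $\rho'_S$ is irreducible precisely when neither $U$ nor $W$ is stable under all the $\rho'_S(\tau_i)$. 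The computation behind everything is that, since $\rho'_S(\tau_i)$ at $t=1$ sends $(x_i,x_{i+1})$ to $(ax_i+cx_{i+1},\,cx_i+ax_{i+1})$, the coordinate-sum functional $\ell(x)=\sum_jx_j$ transforms by $\ell(\rho'_S(\tau_i)x)=\ell(x)+(a+c-1)(x_i+x_{i+1})$; in particular $\rho'_S(\tau_i)\mathbf 1=\mathbf 1+(a+c-1)(e_i+e_{i+1})$.

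The ``only if'' direction is then one line: if $a+c=1$, the formula shows $\rho'_S(\tau_i)$ fixes $\mathbf 1$ and preserves $W$, so $\mathbb{C}^n=U\oplus W$ is a $\rho'_S$-invariant decomposition and $\rho'_S$ is reducible. For the ``if'' direction, assume $a+c\neq 1$: for $n\geq 3$ the vector $(a+c-1)(e_i+e_{i+1})$ lies outside $U$, so $U$ is not $\rho'_S(\tau_i)$-stable, and choosing $x\in W$ with $x_i+x_{i+1}\neq 0$ gives $\ell(\rho'_S(\tau_i)x)\neq 0$, so $W$ is not stable either; hence $0$ and $\mathbb{C}^n$ are the only invariant subspaces. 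If one prefers a hands-on argument ending with Lemma~\ref{lemma} (as its placement suggests), take any $0\neq v=(v_1,\dots,v_n)\in S$: if $v$ is not a multiple of $\mathbf 1$, some $v_i\neq v_{i+1}$, so $\rho'_S(\sigma_i)v-v=(v_{i+1}-v_i)(e_i-e_{i+1})\in S$ and, after spreading by the $\rho'_S(\sigma_j)$'s, $W\subseteq S$; applying a suitable $\rho'_S(\tau_j)$ to $e_1-e_3$ and combining with elements of $W\subseteq S$ then produces $(1-a-c)e_k\in S$, so $e_k\in S$; and if $v=\lambda\mathbf 1$ with $\lambda\neq 0$, then $\rho'_S(\tau_i)v-v=\lambda(a+c-1)(e_i+e_{i+1})\in S$ gives $e_i+e_{i+1}\in S$ for all $i$, whence $2e_2=(e_1+e_2)+(e_2+e_3)-\rho'_S(\sigma_1)(e_2+e_3)\in S$ and $e_2\in S$. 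In either case Lemma~\ref{lemma} gives $S=\mathbb{C}^n$. The small case $n=2$ is checked directly.

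The main obstacle is the ``if'' direction of part (2): one must show that a single arbitrary nonzero vector in an invariant subspace already forces all of $\mathbb{C}^n$, and the care lies in the case split (vectors proportional to $\mathbf 1$ versus not) together with the observation that both candidate proper invariant subspaces $U$ and $W$ fail to be $\rho'_S(\tau_i)$-stable for exactly the same reason, namely that the coordinate-sum functional acquires the factor $a+c-1$. The remaining pieces — part (1) and the ``only if'' direction — reduce to one-line computations once this transformation formula is available.
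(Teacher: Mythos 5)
Your proof is correct (for $n\ge 3$) but your primary argument for part (2) takes a genuinely different route from the paper's. Part (1) is identical: restrict to $B_n$, invoke Theorem \ref{irred} and Lemma \ref{lemma0}. For part (2) at $t=1$, the paper argues ``from below'': it takes an arbitrary nonzero vector $x$ in an invariant subspace $S$, uses the differences $\rho'_S(\sigma_j)(x)-x$ to produce $e_j-e_{j+1}$, $e_j-e_{j+2}$, $e_{j+1}-e_{j+2}$ in $S$, then applies $\rho'_S(\tau_j)$ and a linear combination to extract $(a+c-1)e_{j+1}\in S$, and finishes with Lemma \ref{lemma}. You instead argue ``from above'': since at $t=1$ the $\rho'_S(\sigma_i)$ realize the permutation representation of $S_n$, whose only proper nonzero submodules are $U=\langle\mathbf{1}\rangle$ and $W=\ker\ell$, irreducibility reduces to checking that neither is $\tau_i$-stable, which your one formula $\ell(\rho'_S(\tau_i)x)=\ell(x)+(a+c-1)(x_i+x_{i+1})$ settles in both directions at once. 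This buys a shorter, more conceptual proof and makes the role of the quantity $a+c-1$ transparent; the paper's vector-propagation argument is more elementary and self-contained. Your secondary sketch is essentially the paper's proof, and in fact handles more carefully the case where $S$ consists only of multiples of $\mathbf{1}$ (you compute $\rho'_S(\tau_i)v-v$ explicitly, where the paper passes over this step rather tersely). One caveat applies to both you and the paper: every argument given for the ``if'' direction genuinely needs $n\ge 3$ (your $e_i+e_{i+1}\notin U$ step, the paper's use of the index $j+2$), and at $n=2$, $t=1$ the matrices $\rho'_S(\sigma_1)$ and $\rho'_S(\tau_1)$ commute and share the eigenvector $e_1-e_2$, so the representation is reducible for every $a,c$; your ``checked directly'' for $n=2$ would therefore reveal that part (2) as stated does not extend to $n=2$, a gap present in the paper's own proof as well.
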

\begin{proof}
We consider each case separately.
\begin{itemize}
\item[(1)] First, if $t\neq 1$, then, the restriction of $\rho'_S$ to $B_n$, which is the standard representation $\rho_S$, is irreducible by Theorem \ref{irred}. So, $\rho'_S$ is irreducible by Lemma \ref{lemma0}.
\item[(2)] Now, set $t=1$. 
\begin{itemize}
\item[$\bullet$] For the necessary condition, suppose that $a+c=1$, then, we see that the column vector $(1,1,\ldots,1)^T$ is invariant under $\rho'_S$. So, the representation $\rho'_S$ is reducible, as required.
\item[$\bullet$] For the sufficient condition, suppose that $a+c\neq 1$ and suppose to get a contradiction that $\rho'_S$ is reducible. Let $S$ be a non-trivial subspace of $\mathbb{C}^n$ that is invariant under $\rho'_S(\sigma_i)$ and $\rho'_S(\tau_i)$ for all $1\leq i \leq n-1$ and pick $x=(x_1,x_2,\ldots,x_n)^T \in S$ to be a non-zero vector. For all $1\leq i \leq n-1$, we have,
\vspace*{0.1cm}
\begin{center}
$\rho'_S(\sigma_i)(x)-x=(0,0,\ldots,0,\underbrace{x_{i+1}-x_{i}}_i,\underbrace{x_{i}-x_{i+1}}_{i+1},0,0,\ldots,0)\in S.$
\end{center}
So, if $x_i=x_{i+1}$ for all $1\leq i \leq n-1$, then, $x$ becomes the vector $(1,1,\ldots,1)^T$ multiplied by a constant, which is invariant under $\rho'_S(\tau_1)$ by our assumption, and so, we get that $a+c=1$, a contradiction.\\
\noindent Then, there exists $1\leq j \leq n-1$ such that $x_j\neq x_{j+1}$. Hence, as $\rho'_S(\sigma_j)(x)-x=(x_{j+1}-x_j)e_j+(x_j-x_{j+1})e_{j+1} \in S$, we get that the column vector
\begin{center}
$v_1=e_j-e_{j+1}\in S.$
\end{center}
\vspace*{0.1cm}
And so, we get that the column vector
\vspace*{0.1cm}
\begin{center}
$v_2=\rho'_S(\sigma_{j+1})(v_1)=e_j-e_{j+2} \in S,$
\end{center}
\vspace*{0.1cm}
which gives that the column vector
\vspace*{0.1cm}
\begin{center}
$v_3=\rho'_S(\sigma_j)(v_2)=e_{j+1}-e_{j+2} \in S$.
\end{center}
\vspace*{0.1cm}
Now, we can see that
\vspace*{0.1cm}
\begin{center}
$v_4=\rho'_S(\tau_j)(v_3)=ce_j+ae_{j+1}-e_{j+2}$,
\end{center}
\vspace*{0.1cm}
and so,
\vspace*{0.1cm}
\begin{center}
$v_5=v_4-v_3-cv_1=(a+c-1)e_{j+1}\in S$.
\end{center}
\vspace*{0.1cm}
Hence, as $a+c\neq 1$, we get that $e_{j+1}\in S$. Therefore, by Lemma \ref{lemma}, we get $S=\mathbb{C}^n$, a contradiction. Thus, $\rho'_S$ is irreducible in this case and the proof is completed.
\end{itemize}
\end{itemize}
\end{proof}
\vspace{-0.2cm}
\subsection{On the Faithfulness of $\rho'_s$}
In the second part of this section, we discuss the faithfulness of the representations of the form $\rho'_S$ determined in Section 3. For $n=2$, M. Nasser found necessary and sufficient conditions for the faithfulness of the representations of the form $\rho'_S: SB_2 \to GL_2(\mathbb{C})$ (See \cite{M.N20241}, Theorem 12). Now, for $n\geq 3$, according to Theorem \ref{faith}, the standard representation $\rho_S: B_n\to GL_n(\mathbb{Z}[t^{\pm 1}])$ is unfaithful, which directly give the following result.

\begin{theorem}
The representation $\rho'_S: SB_n\to GL_n(\mathbb{Z}[t^{\pm 1}])$ is unfaithful for all $n\geq 3$.
\end{theorem}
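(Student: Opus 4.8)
The plan is to deduce the statement immediately from Theorem~\ref{faith} by using that the braid group sits inside the singular braid group. The one structural fact to pin down first is that the canonical map $\iota\colon B_n \to SB_n$, $\sigma_i \mapsto \sigma_i$, is injective. The quickest self-contained way is to build a retraction $r\colon SB_n \to B_n$ by setting $r(\sigma_i) = \sigma_i$ and $r(\tau_i) = \sigma_i$ for all $i$, and to check that this assignment respects every defining relation of $SB_n$: relations (\ref{eqs3}) and (\ref{eqs4}) collapse to (\ref{eqs2}), relation (\ref{eqs5}) becomes trivial, and relations (\ref{eqs6}) and (\ref{eqs7}) collapse to the braid relation (\ref{eqs1}). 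Hence $r$ is a well-defined group homomorphism with $r\circ\iota = \mathrm{id}_{B_n}$, so $\iota$ is injective and we may regard $B_n$ as a subgroup of $SB_n$. (Alternatively one may simply cite that $B_n \subset SM_n$ and, by Fenn--Keyman--Rourke, $SM_n$ embeds into $SB_n$.)

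Next I would use that, by construction (and as recorded in Theorems~\ref{ThSB2} and~\ref{ThSB3}, where $\rho'_S(\sigma_i) = \rho_S(\sigma_i)$), the restriction of $\rho'_S$ to the subgroup $\iota(B_n) \le SB_n$ is exactly the standard representation $\rho_S$ of $B_n$. By Theorem~\ref{faith}, for every $n \geq 3$ the representation $\rho_S\colon B_n \to GL_n(\mathbb{Z}[t^{\pm 1}])$ is unfaithful, so there is an element $w \in B_n$ with $w \neq 1$ in $B_n$ and $\rho_S(w) = I_n$. Viewing $w$ inside $SB_n$ via $\iota$, it is still nontrivial by injectivity of $\iota$, while $\rho'_S(\iota(w)) = \rho_S(w) = I_n$. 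Thus $\iota(w)$ is a nontrivial element of $\ker\rho'_S$, so $\rho'_S\colon SB_n \to GL_n(\mathbb{Z}[t^{\pm 1}])$ is unfaithful for all $n \geq 3$, as claimed.

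There is essentially no hard step: the only point deserving a line of justification is the injectivity of $B_n \hookrightarrow SB_n$, which is classical and for which the explicit retraction $r$ above gives a short argument; everything else is a direct consequence of Theorem~\ref{faith} together with the defining property that $\rho'_S$ extends $\rho_S$.
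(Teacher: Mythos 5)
Your proposal is correct and follows the same route as the paper: the paper deduces the theorem directly from Theorem~\ref{faith} together with the fact that $\rho'_S$ restricts to $\rho_S$ on the copy of $B_n$ inside $SB_n$. You merely make explicit the injectivity of $B_n \hookrightarrow SB_n$ (via a valid retraction $r(\sigma_i)=r(\tau_i)=\sigma_i$), a point the paper leaves implicit.
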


On the other hand, it was shown in \cite{C.B} that the commutator subgroup of the pure braid group $P_n$ is in $\ker(\rho_S)$, which implies that it is in $\ker(\rho'_S)$ for any extension $\rho'_S$ of $\rho_S$. This leads us to end this section with the following question.

\begin{question}
Let $\rho'_S: SB_n \to GL_n(\mathbb{Z}[t^{\pm 1}])$ be an extension of the standard representation $\rho_S$ of $B_n$ determined in Section 3.  What is $\ker(\rho'_S)$?
\end{question}

\section{Future Work: Extensions of the Standard Representation $\rho_s$ of $B_n$ to $VSB_n$} %%%%%%%%%% Section 5 %%%%%%%%%%%%%

In this section, we aim to determine the shape of all possible representations $\rho''_S: VSB_n\to GL_n(\mathbb{Z}[t^{\pm 1}])$ that extend the standard representation $\rho_S: B_n\to GL_n(\mathbb{Z}[t^{\pm 1}])$ in the case $n=2$. We leave the case $n>3$ as a future work.

\begin{theorem} \label{Theexttt}
Let $t$ be indeterminate. Let $\rho''_S: VSB_2\to GL_2(\mathbb{Z}[t^{\pm 1}])$ be an extension of the standard representation of $B_2$ to $VSB_2$. Then, $\rho''_S$ is equivalent to one of the following five representations.
\begin{itemize}
\item[(1)] $\rho''_{S_1}: VSB_2 \rightarrow GL_2(\mathbb{Z}[t^{\pm 1}])$ such that
$$\rho''_{S_1}(\sigma_1)=\left(\begin{matrix}
   	0 & t\\
   	1 & 0\\
\end{matrix}\right) ,\ \rho''_{S_1}(\tau_1)=\left(
  \begin{matrix}
   	a & ct\\
   	c & a\\
\end{matrix} \right)\text{ and } \rho''_{S_1}(\nu_1)=\left( \begin{matrix}
   		p & q\\
   		\frac{1-p^2}{q} & -p
\end{matrix} \right),$$
where $a,c,p,q \in \mathbb{Z}[t^{\pm 1}]$.\\
\item[(2)] $\rho''_{S_2}: VSB_2 \rightarrow GL_2(\mathbb{Z}[t^{\pm 1}])$ such that
$$\rho''_{S_2}(\sigma_1)=\left(\begin{matrix}
   	0 & t\\
   	1 & 0\\
\end{matrix}\right),\ \rho''_{S_2}(\tau_1)=\left(
  \begin{matrix}
   	a & ct\\
   	c & a\\
\end{matrix} \right)\text{ and } \rho''_{S_2}(\nu_1)=\left( \begin{matrix}
   		-1 & 0\\
   		r & 1
\end{matrix} \right),$$
where $a,c,r \in \mathbb{Z}[t^{\pm 1}]$.\\
\item[(3)] $\rho''_{S_3}: VSB_2 \rightarrow GL_2(\mathbb{Z}[t^{\pm 1}])$ such that
$$\rho''_{S_3}(\sigma_1)=\left(\begin{matrix}
   	0 & t\\
   	1 & 0\\
\end{matrix}\right),\ \rho''_{S_3}(\tau_1)=\left(
  \begin{matrix}
   	a & ct\\
   	c & a\\
\end{matrix} \right)\text{ and } \rho''_{S_3}(\nu_1)=\left( \begin{matrix}
   		1 & 0\\
   		r & -1
\end{matrix} \right),$$
where $a,c,r \in \mathbb{Z}[t^{\pm 1}]$.\\
\item[(4)] $\rho''_{S_4}: VSB_2 \rightarrow GL_2(\mathbb{Z}[t^{\pm 1}])$ such that
$$\rho''_{S_4}(\sigma_1)=\left(\begin{matrix}
   	0 & t\\
   	1 & 0\\
\end{matrix}\right),\ \rho''_{S_4}(\tau_1)=\left(
  \begin{matrix}
   	a & ct\\
   	c & a\\
\end{matrix} \right)\text{ and } \rho''_{S_4}(\nu_1)=\left( \begin{matrix}
   		-1 & 0\\
   		0 & -1
\end{matrix} \right),$$
where $a,c \in \mathbb{Z}[t^{\pm 1}]$.\\
\item[(5)] $\rho''_{S_5}: VSB_2 \rightarrow GL_2(\mathbb{Z}[t^{\pm 1}])$ such that
$$\rho''_{S_5}(\sigma_1)=\left(\begin{matrix}
   	0 & t\\
   	1 & 0\\
\end{matrix}\right),\ \rho''_{S_5}(\tau_1)=\left(
  \begin{matrix}
   	a & ct\\
   	c & a\\
\end{matrix} \right)\text{ and } \rho''_{S_5}(\nu_1)=\left( \begin{matrix}
   		1 & 0\\
   		0 & 1
\end{matrix} \right),$$
where $a,c \in \mathbb{Z}[t^{\pm 1}]$.
\end{itemize}
\end{theorem}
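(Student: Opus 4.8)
The plan is to start from the presentation of $VSB_2$ and determine which relations involve the generator $\nu_1$, since $\rho''_S$ is required to restrict to the already-classified $\rho'_S$ on the subgroup generated by $\sigma_1$ and $\tau_1$. By Theorem \ref{ThSB2} we may take $\rho''_S(\sigma_1) = \left(\begin{smallmatrix} 0 & t \\ 1 & 0\end{smallmatrix}\right)$ and $\rho''_S(\tau_1) = \left(\begin{smallmatrix} a & ct \\ c & a\end{smallmatrix}\right)$, so the only unknown is $\rho''_S(\nu_1) =: N = \left(\begin{smallmatrix} p & q \\ r & s\end{smallmatrix}\right)$. For $n=2$ the defining relations of $VSB_n$ that mention $\nu_1$ collapse to just three: $\nu_1^2 = 1$ (relation (\ref{eqs81})), commutation $\nu_1 \sigma_1 = \sigma_1 \nu_1$ would not be a relation — rather the braid-type relations (\ref{eqs91})–(\ref{eqs111}) and (\ref{eqs121})–(\ref{eqs131}) are all vacuous for $n=2$, so in fact the only surviving relation is $\nu_1^2 = 1$. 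Wait — one must check carefully: for $n=2$ there is no $\sigma_2$, $\tau_2$ or $\nu_2$, so relations (\ref{eqs91}) through (\ref{eqs131}) are all empty, and relations (\ref{eqs121})–(\ref{eqs131}) are empty too. Hence the entire content is the single matrix equation $N^2 = I_2$, plus invertibility of $N$ (automatic once $N^2 = I$ since then $N^{-1} = N$).

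So the core of the proof is: classify all $N \in GL_2(\mathbb{Z}[t^{\pm 1}])$ with $N^2 = I_2$, and then sort the solutions into equivalence classes where two representations are equivalent if they are conjugate by a matrix that simultaneously respects the already-fixed images of $\sigma_1$ and $\tau_1$ — or, more likely in the author's intended reading, simply up to global conjugacy of the whole representation (with the normal forms chosen so that $\sigma_1$ and $\tau_1$ retain their standard shape, or the conjugation absorbed). The involutions in $GL_2$ over a commutative ring split by the Cayley–Hamilton / minimal-polynomial analysis: $N^2 = I$ forces the minimal polynomial to divide $X^2 - 1 = (X-1)(X+1)$. If $N = \pm I$ we land in cases (4) and (5). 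Otherwise $N$ has both eigenvalues $\pm 1$, so $\operatorname{tr}(N) = 0$ and $\det(N) = -1$; writing $N = \left(\begin{smallmatrix} p & q \\ r & -p\end{smallmatrix}\right)$, the condition $N^2 = I$ becomes exactly $p^2 + qr = 1$. This single Diophantine equation over $\mathbb{Z}[t^{\pm 1}]$ is what produces the three remaining normal forms: if $q$ is a unit (or more precisely, after allowing the substitution that the author uses, if $q \neq 0$ and one can divide) one gets $r = (1-p^2)/q$, yielding case (1); if $q = 0$ then $p^2 = 1$ in $\mathbb{Z}[t^{\pm 1}]$ forces $p = \pm 1$, giving the lower-triangular forms $\left(\begin{smallmatrix} -1 & 0 \\ r & 1\end{smallmatrix}\right)$ and $\left(\begin{smallmatrix} 1 & 0 \\ r & -1\end{smallmatrix}\right)$ of cases (2) and (3). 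One should double-check that the symmetric situation $r = 0$ is either covered by case (1) (taking the transpose/conjugate) or folds into cases (2)–(3) by an obvious conjugation — this is where the phrase "equivalent to" in the statement does its work.

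The steps in order: (i) write down $N^2 = I$ entrywise, getting $p^2 + qr = 1$, $(p+s)q = 0$, $(p+s)r = 0$, $s^2 + qr = 1$; (ii) case split on whether $p + s = 0$; (iii) in the case $p+s \neq 0$ deduce $q = r = 0$, hence $p^2 = s^2 = 1$, hence (using that $\pm1$ are the only square roots of $1$ in $\mathbb{Z}[t^{\pm1}]$, which follows from $\mathbb{Z}[t^{\pm1}]$ being an integral domain) $p, s \in \{1,-1\}$; combined with $p + s \neq 0$ this gives $N = I$ or $N = -I$, i.e.\ cases (4), (5); (iv) in the case $p + s = 0$, set $s = -p$ and reduce to $p^2 + qr = 1$, then case-split on $q = 0$ versus $q \neq 0$, producing cases (2), (3) when $q = 0$ (again using that $p^2 = 1 \Rightarrow p = \pm 1$) and case (1) when $q \neq 0$; (v) finally argue that any solution with $q = 0$ but $p \neq \pm 1$, or with $r = 0$ and $q \neq 0$, is conjugate to one already listed, so that the five families are exhaustive up to equivalence. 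The main obstacle is not the computation — which is short — but rather being honest about the equivalence relation in step (v): one must pin down precisely which conjugations are allowed (global, or those fixing $\rho''_S(\sigma_1)$ and $\rho''_S(\tau_1)$), and verify that the list is both exhaustive and irredundant under it; in particular one should check that no further identifications collapse cases (1)–(5) into each other, and that the "$q \neq 0$, $r$ arbitrary" family of case (1) genuinely cannot always be normalized to triangular form, so that all five cases are needed.
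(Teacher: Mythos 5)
Your proposal is correct and follows essentially the same route as the paper: the paper likewise observes that for $n=2$ the only relation involving $\nu_1$ is $\nu_1^2=1$, writes down exactly the four equations $p^2+qr=1$, $(p+s)q=0$, $(p+s)r=0$, $s^2+qr=1$, and solves them by the same case analysis you describe. The extra caution in your step (v) is unnecessary: since $\mathbb{Z}[t^{\pm 1}]$ is an integral domain, the five families literally exhaust all involutions (for instance, a solution with $p+s=0$, $q\neq 0$, $r=0$ forces $p=\pm 1$ and is already an instance of family (1)), so no conjugation argument is needed for exhaustiveness.
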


\begin{proof}
Since the representation $\rho''_{S}$ is an extension of the standard representation $\rho_{S}$, and since $VSB_2$ is an extension of $B_2$ and $SB_2$, it follows directly that 
$$\rho''_{S}(\sigma_1)=\rho'_{S}(\sigma_1)=\left(\begin{matrix}
   	0 & t\\
   	1 & 0\\
\end{matrix}\right) \text{ and } \rho''_{S}(\tau_1)=\rho'_{S}(\tau_1)=\left(
  \begin{matrix}
   	a & ct\\
   	c & a\\
\end{matrix} \right), \text{ where } a,c \in \mathbb{Z}[t^{\pm 1}].$$
For the generator $\nu_1$ of $VSB_2$, set $\rho''_S(\nu_1)=\left(
  \begin{matrix}
   	p & q\\
   	r & s\\
\end{matrix} \right), \text{ where } p,q,r,s \in \mathbb{Z}[t^{\pm 1}].$ The only relation between the generators of $VSB_2$ that contains $\nu_1$ is: $\nu_1^2=1$, which implies that $\rho''_S(\nu_1^2)=I_2.$ Applying this relation gives the following system of four equations and four unknowns.
\begin{equation}
-1+p^2+qr=0
\end{equation}
\begin{equation}
pq+qs=0
\end{equation}
\begin{equation}
pr+rs=0
\end{equation}
\begin{equation}
-1+qr+s^2=0
\end{equation}
 Solving this system as we did in Theorems \ref{ThSB2} and \ref{ThSB3} implies the required results.
\end{proof}

As a future research to generalize Theorem \ref{Theexttt}, we ask the following question.

\begin{question}
Consider $n\geq 3$ and let $t$ be indeterminate. Let $\rho''_S: VSB_n\to GL_n(\mathbb{Z}[t^{\pm 1}])$ be an extension of the standard representation of $B_n$ to $VSB_n$. Then, what are the possible shapes of $\rho''_S$?
\end{question}

Also, as we examined the irreducibility of the representations that extend the standard representation of $B_n$ to $SB_n$ for all $n\geq 3$ in Section 4, one more question that could be addressed in future research is mentioned below.

\begin{question}
Consider $n\geq 2$ and let $t$ be indeterminate. Let $\rho''_S: VSB_n\to GL_n(\mathbb{Z}[t^{\pm 1}])$ be an extension of the standard representation of $B_n$ to $VSB_n$. For what conditions we get that $\rho''_S$ is irreducible?
\end{question}

\vspace{-0.3cm}
%%%%%%%%%%% REFERENCES %%%%%%%%%%%%%%%%

\end{document}